\documentclass[12pt]{article}
\usepackage{amsmath}
\usepackage[english]{babel}
\usepackage{eucal}
\usepackage{stmaryrd}
\usepackage{mathabx}
\usepackage{latexsym}
\usepackage{amscd}
\usepackage{amsthm}

\newtheorem{theorem}{Theorem}
\newtheorem{lemma}[theorem]{Lemma}


\theoremstyle{definition}
\newtheorem{definition}[theorem]{Definition}
\newtheorem{algorithm}[theorem]{Algorithm}

\newtheorem{example}[theorem]{Example}

\textwidth=170mm  \topmargin=-4mm \oddsidemargin=0.13mm \headheight=2.6mm \headsep=2.6mm \textheight=243.1mm

\begin{document}

\begin{center}
\textbf{\large{Markovsky algorithm on $i$-invertibile groupoids}}
\end{center}

\begin{center}
\textsc{Nadeghda N. Malyutina and Alexandra V. Scerbacova  and  Victor A. Shcherbacov}
\end{center}

\bigskip

\textsc{Abstract.}  We propose modification of Markovsky crypto-algorithm \cite{MARKOVSKI, VS_CRYPTO_12, 2017_Scerb}   on $n$-ary groupoids \cite{2}  that are  invertible on at least one place.

\medskip

\textbf{Keywords:} left quasigroup, $n$-ary quasigroup, $i$-invertibile groupoid, Markovsky algorithm

\medskip

\textbf{AMS:} 20N15, 20N05,  05B15, 94A60

\medskip

We continue researches  of applications of $n$-ary groupoids that are invertible on $i$-th place  in cryptology \cite{2018_Malutina}.

It is clear that Markovsky crypto-algorithm which is based on  binary or $n$-ary quasigroup has  better \lq\lq mixing properties\rq\rq \, than proposed algorithm.

But from the other side it is well known \cite{VOEVODE, VOIVODA_04}  that binary ($n$-ary) quasigroup used  in  Markovsky algorithm  is its  key. It is clear that number of $i$-invertible $n$-groupoids (number $n$ is fixed) is more then number of $n$-ary quasigroups (number $n$ is fixed).

\medskip

\begin{definition}
n-Ary groupoid $(Q, f)$ is called invertible on  the $i$-th place, $i\in \overline{1,n}$, if the equation $f(a_1, \dots, a_{i-1}, x_i, a_{i+1}, \dots, a_n)=a_{n+1}$  has unique solution for any elements:
 $a_1,  \dots,   a_{i-1}, $ $  a_{i+1}, \dots, $ $a_n, a_{n+1} \in Q$ \cite{2}.

\end{definition}

In this case operation  ${}^{(i, n+1)}f(a_1, \dots, a_{i-1}, a_{n+1}, a_{i+1}, \dots, a_n)= x_i$ is defined in unique way and we have:

\begin{equation} \label{seven_one}
\begin{split}
 f(a_1, \dots, a_{i-1}, {}^{(i, n+1)}f(a_1, \dots, a_{i-1}, a_{n+1}, a_{i+1}, \dots, a_n),  a_{i+1}, \dots, a_n)
= a_{n+1}, \\
  {}^{(i, n+1)}f(a_1, \dots, a_{i-1}, f(a_1, \dots, a_{i-1}, x_i, a_{i+1}, \dots, a_n),   a_{i+1}, \dots, a_n)
= x_i.
\end{split}
\end{equation}

A translation of $i$-invertible $n$-ary groupoid  $(Q, f)$ ($n>2$) will be denoted as $ T(a_1, \dots, $ $a_{i-1}, -,$
$a_{i+1}, \dots, a_n)$, where $a_i\in Q$ for all $i\in \overline {1,n}$ and
\begin{equation*}
T(a_1, \dots, a_{i-1}, -, a_{i+1}, \dots, a_n) x = f(a_1, \dots, a_{i-1}, x, a_{i+1}, \dots, a_n)
\end{equation*}
for all $x\in Q$.

 From the definition of $i$-invertible $n$-ary groupoid  $(Q,f)$ it follows that any translation of the groupoid
 $(Q,f)$ is a permutation of the set $Q$.
 In the  next lemma we suppose that $i=n$. It is clear that next lemma is true for any other value of variable $i$.

\begin{lemma} \label{INVERSE_TRANS_IN_N_PAR_i}
If ${}_fT(a_1, \dots, a_{n-1}, -)$ is a translation of an $i$-invertible $n$-groupoid  $(Q, f)$, then
$${}_fT^{-1}(a_1, \dots, a_{n-1}, -) = {}_{{}^{(n, n+1)}f} T(a_1, \dots, a_{n-1}, -). $$
\end{lemma}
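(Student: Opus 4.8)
The plan is to unwind both translations to ordinary maps on $Q$ and then verify that the right-hand translation is a two-sided inverse of the left-hand one. Write $\alpha = {}_fT(a_1, \dots, a_{n-1}, -)$ and $\beta = {}_{{}^{(n,n+1)}f}T(a_1, \dots, a_{n-1}, -)$, so that by the definition of a translation $\alpha x = f(a_1, \dots, a_{n-1}, x)$ and $\beta x = {}^{(n,n+1)}f(a_1, \dots, a_{n-1}, x)$ for every $x \in Q$. The assertion $\beta = \alpha^{-1}$ then reduces to establishing the two equalities $\alpha\beta = \mathrm{id}_Q$ and $\beta\alpha = \mathrm{id}_Q$.

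The key step is to specialize the two identities \eqref{seven_one} to the case $i = n$. When $i = n$ the trailing block $a_{i+1}, \dots, a_n$ is empty, so these identities collapse to
$$f(a_1, \dots, a_{n-1}, {}^{(n,n+1)}f(a_1, \dots, a_{n-1}, a_{n+1})) = a_{n+1}$$
and
$${}^{(n,n+1)}f(a_1, \dots, a_{n-1}, f(a_1, \dots, a_{n-1}, x_n)) = x_n.$$
Reading the first identity with $a_{n+1}$ renamed to an arbitrary $y \in Q$ gives $\alpha(\beta y) = y$, that is $\alpha\beta = \mathrm{id}_Q$; reading the second with $x_n$ renamed to an arbitrary $x \in Q$ gives $\beta(\alpha x) = x$, that is $\beta\alpha = \mathrm{id}_Q$. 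Since $\alpha$ is already known to be a permutation of $Q$ (as remarked before the lemma, every translation of an $i$-invertible groupoid is bijective), either one of these equalities alone would force $\beta = \alpha^{-1}$; having both at hand makes the conclusion immediate without any appeal to finiteness of $Q$.

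I do not anticipate a genuine obstacle, since the content of the lemma is essentially a reformulation of the definition of the derived operation ${}^{(n,n+1)}f$ given by \eqref{seven_one}. The only point demanding care is the index bookkeeping, namely checking that the substitution $i = n$ really does empty the tail $a_{i+1}, \dots, a_n$ so that \eqref{seven_one} takes the reduced two-line form displayed above; once this is confirmed, the remainder is a direct substitution and renaming of variables.
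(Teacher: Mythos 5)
Your proof is correct and follows essentially the same route as the paper: both arguments specialize the defining identities \eqref{seven_one} to $i=n$ and verify by direct substitution that the ${}^{(n,n+1)}f$-translation inverts the $f$-translation. The only difference is that you check both compositions $\alpha\beta=\mathrm{id}_Q$ and $\beta\alpha=\mathrm{id}_Q$, whereas the paper verifies just the single composition ${}^{(n,n+1)}f(a_1,\dots,a_{n-1},f(a_1,\dots,a_{n-1},x))=x$ and implicitly relies on translations being permutations---a harmless and in fact slightly tidier redundancy on your part.
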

\begin{proof}
In the proof we omit the symbol $f$ in the notation of translations of the groupoid  $(Q, f)$.
We have
\begin{equation}
\begin{split}
& T^{-1} (a_1, \dots, a_{n-1}, -) (T(a_1, \dots, a_{n-1}, -)x) = \\ &
T^{-1} (a_1, \dots, a_{n-1}, -) f(a_1, \dots,  a_{n-1}, x) = \\ &
{{}^{(n, n+1)}f} (a_1, \dots, a_{n-1}, f(a_1, \dots,  a_{n-1}, x)) \overset{(\ref{seven_one})}{=}  x.
\end{split}
\end{equation}
\end{proof}

\begin{algorithm} \label{ALGn_ar} Let $Q$ be a non-empty finite alphabet and  $k$ be a natural number, $u_i,
v_i \in Q$, $i\in \{1,..., k\}$.  Define an $n$-ary groupoid  $(Q, f)$ which is invertible on $n$-th  place.
It is clear that  groupoid  $(Q, {}^{(n,\, n+1)}f)$ is defined in a unique way.

Take the fixed elements $l_1^{(n^2-n)/2}$ ($l_i\in Q$), which  are called  leaders.

Let $u_1 u_2... u_k$ be a $k$-tuple  of letters from $Q$, $a, b, c, d, \dots$ are natural numbers.

  \begin{equation} \label{ALG_4_equations_N_ARY_TR_i}
\begin{split}
& v_1 = T^a(l_1, l_2, \dots, l_{n-1}, u_1),  \\
& v_2 = T^b(l_n, l_{n+1}, \dots, l_{2n-3}, v_1,  u_2), \\
& \dots , \\
& v_{n-1} = T^c(l_{(n^2 - n)/2}, v_1,\dots,  v_{(n-2)}, u_{n-1}), \\
& v_{n}= T^d(v_{1}, \dots, v_{n-1}, u_{n}), \\
& v_{n+1}= T^e(v_{2}, \dots, v_{n}, u_{n+1}), \\
& v_{n+2}= T^t(v_{3}, \dots, v_{n+1}, u_{n+2}), \\
& \dots
\end{split}
\end{equation}
Therefore we obtain the following ciphertext $v_1v_2 \dots v_k$.

\medskip

 Taking in consideration Lemma \ref{INVERSE_TRANS_IN_N_PAR_i} we can say that the  deciphering algorithm can be constructed similar to  the deciphering  Algorithm given in \cite{2017_Scerb, 2018_Malutina}.

\end{algorithm}

\medskip

\begin{example}
We construct ternary groupoid $(R_3, f)$, $R_3 =\{0, 1, 2 \}$, which is defined over the ring $(R_3, +, \cdot)$ of residues modulo 3 and which is invertible on the third place. We define ternary operation $f$ on the set $R_3$ in the following way:
$f(x_1, x_2, x_3) = \alpha x_1 + \beta x_2 +  x_3 = x_4$, where $\alpha 0= 2,  \alpha 1=2,  \alpha 2=0, $
$\beta 0 = 1$, $\beta 1 = 1$, $\beta 2 = 1$.

Below $T_{2,0} 1 = 2$ means that $f(2, 0, 1) = 2$ and so on.

We have:

$T_{0, 0} 0 = 0$, $ T_{0,0} 1 = 1, $ $T_{0,0} 2 = 2$, $T_{0,1} 0 = 0$,  $T_{0,1} 1 = 1,$ $T_{0,1} 2 = 2$,

$T_{0,2} 0 = 0$,  $T_{0,2} 1 = 1$,  $T_{0,2} 2 = 2$, $T_{1,0} 0 = 0$,  $T_{1,0} 1 = 1$,  $T_{1,0} 2 = 2$,

$T_{1,1} 0 = 0$,  $T_{1,1} 1 = 1$,  $T_{1,1} 2 = 2$, $T_{1,2} 0 = 0$,  $T_{1,2} 1 = 1$,  $T_{1,2} 2 = 2$,

$T_{2,0} 0 = 1$,  $T_{2,0} 1 = 2$,  $T_{2,0} 2 = 0$, $T_{2,1} 0 = 1$,  $T_{2,1} 1 = 2$,  $T_{2,1} 2 = 0$,

$T_{2,2} 0 = 1$,  $T_{2,2} 1 = 2$,  $T_{2,2} 2 = 0$.

\bigskip

In this case ${}^{(3.4)}f(x_1, x_2, x_4)   = x_3 = 2\cdot \alpha x_1 + 2\cdot \beta x_2 + x_4. $

\bigskip

Check. $f(x_1, x_2, x_3) = f(x_1, x_2, {}^{(3.4)}f(x_1, x_2, x_4)) = \alpha x_1 + \beta x_2  + 2\cdot \alpha x_1 + 2\cdot \beta x_2 + x_4 = x_4$.

${}^{(3.4)}f(x_1, x_2, f(x_1, x_2, x_3))   = 2\cdot \alpha x_1 + 2\cdot \beta x_2 + \alpha x_1 + \beta x_2 +  x_3 = x_3 $. We propose the following elements  $l_1 = 2, l_2 = 0, l_3 = 2$ as leader elements. In Algorithm \ref{ALG_4_equations_N_ARY_TR_i} we put $a=1$, $b=2$, $c=1$, $d=2$ and so on.

In this case open text $2\, 0\, 1\, 1\, 2\, 1$ is transformed in the following crypto-text:

$T^1_{l_1, l_2} u_1 = f(l_1, l_2, u_1) = f(2, 0, 2) = 0 = v_1$,

$T^2_{l_3, v_1} u_2 = f(2, 0, f (2, 0, 0)) = f(2, 0, 1) = 2 = v_2$,

$T^1_{v_1, v_2} u_3 = f(0, 2, 1) =  1 = v_3$,

$T^2_{v_2, v_3} u_4 = f(2, 1, f (2, 1, 1)) = f(2, 1, 2) = 0 = v_4$,

$T^1_{v_3, v_4} u_5 = f(1, 0, 2) =  2 = v_5$,

$T^2_{v_4, v_5} u_6 = f(0, 2, f (0, 2, 1)) = f(0, 2, 1) = 1 = v_6$.

\medskip

We obtain the following crypto-text $0\, 2\, 1\, 0\, 2\, 1$.

We have the following deciphering procedure. Notice that in conditions of this example  the following fact is true: $T^{-1} (x, y, -) = T^{2} (x, y, -)$.

\smallskip

$T^2_{l_1, l_2} v_1 = f(l_1, l_2, f(l_1, l_2, v_1))  =  f(2, 0, f(2, 0, 0)) = f(2, 0, 1) = 2 =  u_1$,

$T^1_{l_3, v_1} v_2 = f(2, 0, 2)  =  0 = u_2$,

$T^2_{v_1, v_2} v_3 = f(0, 2, f(0, 2, 1)) = f(0, 2, 1) =  1 = u_3$,

$T^1_{v_2, v_3} v_4 = f(v_2, v_3, v_4) = f(2, 1, 0) = 1 = u_4$,

$T^2_{v_3, v_4} v_5 = f(v_3, v_4, f(v_3, v_4, v_5)) =  f(1, 0, f(1, 0, 2)) =
f(1, 0,  2) = 2 =   u_5$,

$T^1_{v_4, v_5} v_6 = f(0, 2, 1) = 1 = u_6$.

\smallskip

Therefore we have the following open text: $2\, 0\, 1\, 1\, 2\, 1$.
\end{example}

\bigskip

\vspace{2mm}
\begin{parbox}{118mm}{\footnotesize Nadeghda  Malyutina$^{1}$, Alexandra Scerbacova$^{2}$,  Victor Shcherbacov$^{3}$
\vspace{3mm}

\noindent
$^{1}$Senior Lecturer//Shevchenko Transnistria State University

\noindent
Email: 231003.bab.nadezhda@mail

\vspace{3mm}

\noindent
$^{2}$Master Student//Gubkin Russian State Oil and Gas University

\noindent
Email: scerbik33@yandex.ru

\vspace{3mm}

\noindent
$^{3}$Principal Researcher//
Institute of Mathematics and Computer Science

\noindent Email:
 victor.scerbacov@math.md
}
\end{parbox}

\end{document}